\tikzset{>=latex}
\newtheorem{theorem}{Theorem}[section]
\newtheorem{lemma}[theorem]{Lemma}
\newtheorem{proposition}[theorem]{Proposition}
\theoremstyle{definition}
\theoremstyle{remark}
\newcommand{\setword}[2]{%
  \phantomsection
  #1\def\@currentlabel{\unexpanded{#1}}\label{#2}%
}
\newcommand{\id}{\textnormal{\textbf{id}}}
\newcommand{\diag}{\textnormal{\textbf{diag}}}
\newcommand{\dt}{\textnormal{\textbf{det}}}
\newcommand{\rk}{\textnormal{\textbf{rk}}}
\newcommand{\tr}{\textnormal{\textbf{tr}}}
\newcommand{\R}{\mathbb{R}}
\newcommand{\RM}{\mathcal{R}}
\newcommand{\SB}{\textnormal{\textbf{S}}}
\newcommand{\IB}{\textnormal{\textbf{I}}}
\newcommand{\JB}{\textnormal{\textbf{J}}}
\newcommand{\RB}{\textnormal{\textbf{R}}}
\newcommand{\NB}{\textnormal{\textbf{N}}}
\newcommand{\XB}{\textnormal{\textbf{X}}}
\newcommand{\DFE}{\textnormal{\textbf{DFE}}}
\newcommand{\EE}{\textnormal{\textbf{EE}}}
\renewenvironment{abstract}{%
\hfill\begin{minipage}{0.95\textwidth}
\rule{\textwidth}{1pt}}
{\par\noindent\rule{\textwidth}{1pt}\end{minipage}}
\renewcommand\@maketitle{%
\hfill
\begin{minipage}{0.95\textwidth}
\vskip 2em
\let\footnote\thanks 
{\Large \bf \@title \par }
\vskip 1.5em
{\large \@author \par}
\end{minipage}
\vskip 1em \par
}
\begin{document}
%
\title{\nohyphens{Analysis of a network--based SIR model}}
\author[a]{Karunia Putra Wijaya}
\author[b,$\ast$]{Dipo Aldila}
\affil[a]{\small\emph{Mathematical Institute, University of Koblenz, 56070 Koblenz, Germany}}
\affil[b]{\small\emph{Department of Mathematics, Universitas Indonesia, 16424 Depok, Indonesia}}
\affil[$\ast$]{Corresponding author. Email: \href{mailto:aldiladipo@sci.ui.ac.id}{aldiladipo@sci.ui.ac.id}}
\maketitle
\begin{abstract}
{\textbf{Abstract}}: This study focuses on analyzing a deterministic SIR model governing the dynamics of the hosts and vectors on an urban network. Our analysis scrutinizes the typical existence--stability of the equilibria as well as the sensitivity of the basic reproductive number. The latter is a groundbreaking finding that has a strong applicative relevance, particularly for addressing the problem of the distribution of job opportunities toward diminishing the disease persistence on the entire network.
~\\
~\\
{\textbf{Keywords}}: \textsf{network--based SIR model, basic reproductive number, existence--stability analysis, sensitivity analysis, decentralized job opportunities}
\vspace*{-5pt}
\end{abstract}


\section{Introduction}
In this study, we examine an urban network consisting of $n$ zones in which a vector--borne disease overwhelms the settling human population. The point of departure in the modeling lies in setting up such network as a weighted graph $(V,E,\XB)$. The set $V\subset\R^2$ denotes a set of zones in which a pair of zones $v_i,v_j\in V$ are connected through directed edges $e_{ji},e_{ij}\in E$ that do not need to coincide. Each zone $v_i\in V$ is associated with zonal population $X_i\in\XB$, which varies over time during the observation. Human mobility can be parameterized if personal traits and locations are disregarded, i.e., it can be considered as the average timely probability of a person in zone $v_j$ to move to another zone $v_i$, denoted as $c_{ij}$. Thus, $c_{ij}X_j$ would yield the number of people who move from $v_j$ to $v_i$ instantaneously. For the sake of our model, we assume that $c_{ii}=0$ and matrix $(c_{ij})$ is column stochastic, i.e., $\sum_{i}c_{ij}=1$ for every $j$. A matrix with such a property can be easily verified by examining either the commuter data or utilizing an approximative strategy, such as the radiation model \cite{SGM2012,MSJ2013}. It is to be noted that the radiation model itself can produce different results depending on how we measure the distance, which can fall between the maximum topological distance, (typical) distance between the centers, or length of each edge $|e_{ij}|$ that connects two centers, provided the edges are pre--defined \cite{WAR2017}. Owing to the notation $\XB=(X_1,\cdots,X_n)$, the timely \emph{flux balance} in zone $v_i$, denoted as $\Delta X_i$, is expressed as
\begin{equation*}
   \Delta X_i =\Phi \XB\quad\text{where }\Phi=(\phi_{ij}) := \begin{dcases}
		c_{ij} & \text{for $i\neq j$} \\
		- \sum_{k=1}^n c_{ki} & \text{for $i=j$}
	\end{dcases}.
\end{equation*}%
The above equation yields the difference between the number of people from all the zones who enter $v_i$ and the number of people in $v_i$ who exit to move into other zones. The preceding assumption applied to the matrix $(c_{ij})$ yields the simplification that $\phi_{ii}=-1$ for all $i$. For the several important properties of the flux matrix $\Phi$, we refer the reader to~\cite{WAR2017}. Our current model is developed from~\cite{WAR2017} for which previously a time scale separation analysis was performed as in~\cite{RAS2013} to exclude the vector dynamics
\begin{subequations}
\label{eq:model}
\begin{align}
\dot{\SB} &= \mu(\IB+\RB)-\diag(\beta)\diag(\IB+\diag(\nu) \NB)^{-1}\diag(\SB)\IB + \kappa \RB + \sigma \rhd \Phi \SB,\label{eq:S}\\
\dot{\IB} &= \diag(\beta)\diag(\IB+\diag(\nu) \NB)^{-1}\diag(\SB)\IB - (\gamma+\mu) \IB + p\sigma\rhd \Phi \IB,\label{eq:I}\\
\dot{\RB} &= \gamma \IB - (\mu+\kappa)\RB + \sigma \rhd \Phi \RB.\label{eq:R}
\end{align}
\end{subequations}%
Note that $\SB=(S_1,\cdots,S_n)$, $\IB=(I_1,\cdots,I_n)$, $\RB=(R_1,\cdots,R_n)$, and $\NB=(N_1,\cdots,N_n)$, denote the susceptible subpopulations, the infective subpopulations, the recovered subpopulations, and the zonal populations ($N_i=S_i+I_i+R_i$) of all the zones, respectively. The involved parameters $\mu,\beta,\nu,\kappa,\gamma,p,\sigma$ denote the death rate, the infection rates, the indicators of the existing infective vectors, the loss--of--immunity rate, the recovery rate, the free--to--bed--rest ratio, and the maximum flux loads, respectively. Mathematically, this SIR model resembles that in~\cite{WAR2017}, except that the infection rates $\beta=(\beta_1,\cdots,\beta_n)$, the maximum flux loads $\sigma=(\sigma_1,\cdots,\sigma_n)$, and the indicators $\nu=(\nu_1,\cdots,\nu_n)$ are not necessarily uniform across the zones. In view of $\Phi=(\Phi_1|\cdots|\Phi_n)$, $\sigma\rhd\Phi$ represents $(\sigma_1\Phi_1|\cdots|\sigma_n\Phi_n)$, which is similar to $(\diag(\sigma)\Phi')'$. Our motivation for this development started from the consideration that some zones can function to a high extent of leisure resorts that attract people who are willing to be outdoor, where the vectors are the most probable to fly. Accordingly, the endemicity of each zone depends on the probability of people to lie around outdoor and during day time, which satisfies the preference of, for example, \emph{Aedes aegypti} mosquitoes as day biters. Another motivation was related to our finding regarding field commuter data in the city of Jakarta, Indonesia \cite{BPS2015}, according to which the average zonal share of people who commute from one zone to all the other zones is unique. In this model, the shares are represented as $\sigma_1,\cdots,\sigma_n$. In the remainder of this section, we elicit some facts regarding the matrix $\sigma\rhd\Phi$ that are essential for further analysis. For pronouncing practical relevance, we assume that $\Phi$ can be expressed as $\Phi=\Phi_+-\id$, where $\Phi_+$ is irreducible.
\begin{proposition}\label{prop:phi}
The following assertions hold true:
\begin{enumerate}[label=\normalfont (A\arabic*)]
\item$\dt(\sigma\rhd\Phi)=0$.\label{ad:detzero}
\item For any real positive $r$, $\dt(\sigma\rhd\Phi-r\id)\neq 0$, and $\sigma\rhd\Phi-r\id$ is a stable matrix, having eigenvalues with negative real part.\label{ad:det}
\item There exists a unique positive solution $x$ $($up to positive scaling$)$ of $\sigma\rhd\Phi x=0$.\label{ad:possol}
\end{enumerate}
\end{proposition}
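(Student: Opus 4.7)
The strategy is to route each of the three assertions through a single classical tool: column--stochasticity of $\Phi$ for \ref{ad:detzero}, Gershgorin's disc theorem for \ref{ad:det}, and the Perron--Frobenius theorem applied to $\Phi_+$ for \ref{ad:possol}. Throughout I will exploit the factorization $\sigma\rhd\Phi=\Phi\diag(\sigma)$, which is immediate from the column--scaling definition.

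For \ref{ad:detzero}, I would observe that $\phi_{ii}=-1$ and $\sum_{i\neq j}\phi_{ij}=\sum_{i\neq j}c_{ij}=1$ force every column of $\Phi$ to sum to zero; hence $\mathbf{1}^\top\Phi=0$ and $\dt(\sigma\rhd\Phi)=\dt(\Phi)\prod_j\sigma_j=0$. For \ref{ad:det}, set $M:=\sigma\rhd\Phi-r\id$; in column $j$ the diagonal entry equals $-\sigma_j-r$ while the off--diagonal entries $\sigma_j c_{ij}\geq 0$ have moduli summing to $\sigma_j\sum_{i\neq j}c_{ij}=\sigma_j$. Gershgorin's disc theorem applied to $M^\top$ then places every eigenvalue $\lambda$ of $M$ inside some disc $\{z:|z-(-\sigma_j-r)|\leq\sigma_j\}$, so $\mathrm{Re}(\lambda)\leq -r<0$, simultaneously excluding the eigenvalue $0$ and establishing Hurwitz--stability.

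For \ref{ad:possol}, I would use the decomposition $\Phi=\Phi_+-\id$ with $\Phi_+=(c_{ij})$ column stochastic and, by hypothesis, irreducible. The identity $\mathbf{1}^\top\Phi_+=\mathbf{1}^\top$ already exhibits $1$ as an eigenvalue of $\Phi_+$ with a positive left eigenvector; Perron--Frobenius for irreducible nonnegative matrices then pins this down as the spectral radius $\rho(\Phi_+)=1$, forces it to be a \emph{simple} eigenvalue, and yields a strictly positive right eigenvector $u$, unique up to scaling. Consequently $\ker\Phi=\mathrm{span}\,u$ is one--dimensional and positively generated. Finally, $\sigma\rhd\Phi\,x=0$ is equivalent to $\diag(\sigma)x\in\ker\Phi$, so $\ker(\sigma\rhd\Phi)=\mathrm{span}\bigl(\diag(\sigma)^{-1}u\bigr)$, which remains componentwise positive and unique up to positive scaling.

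The only genuinely non--routine step is the invocation of Perron--Frobenius in \ref{ad:possol}: it is the \emph{simplicity} of the Perron root of $\Phi_+$, a consequence of irreducibility rather than mere nonnegativity, that upgrades the nontrivial kernel provided by \ref{ad:detzero} to a one--dimensional one with a positive generator. Everything else is routine bookkeeping once the factorizations $\sigma\rhd\Phi=\Phi\diag(\sigma)$ and $\Phi=\Phi_+-\id$ are in place.
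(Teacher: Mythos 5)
Your proof is correct. The paper itself states Proposition~\ref{prop:phi} without proof (deferring the properties of the flux matrix to~\cite{WAR2017}), and your route is exactly the one the paper's setup invites: the factorization $\sigma\rhd\Phi=\Phi\,\diag(\sigma)$ together with zero column sums of $\Phi$ gives \ref{ad:detzero}, column-wise Gershgorin discs centered at $-\sigma_j-r$ with radius $\sigma_j$ give \ref{ad:det}, and Perron--Frobenius applied to the irreducible column-stochastic $\Phi_+$ (simplicity of the Perron root $1$ and positivity of its right eigenvector) gives \ref{ad:possol}. The only point worth making explicit is that \ref{ad:possol} needs $\sigma_i>0$ for all $i$ so that $\diag(\sigma)$ is invertible and $\diag(\sigma)^{-1}u$ is positive; this is implicit in the model (the $\sigma_i$ are positive commuter shares), and indeed the assertion itself would fail if some $\sigma_j=0$, so your implicit assumption matches the paper's.
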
%

\section{Analysis of the equilibria}
We subdivide this section into several parts which we subsequently present for the benefit of the reader. The first part is concerned with the biological relevance of this model. This includes the following results whose proofs are not provided because of their triviality.
\begin{lemma}\label{lem:biology}
The following assertions hold true:
\begin{enumerate}[label=\normalfont (B\arabic*)]
\item The nonnegative orthant $\R^{3n}_+$ is positively invariant under the solution of the SIR model.\label{ad:orthant}
\item By defining all the force terms $S_iI_i\slash (I_i+\nu_i N_i)$ to be zero when $S_i=I_i=R_i=0$, the model admits a unique solution for a given nonnegative initial condition.
\item For a constant total population $\mathcal{N}$, the set~$\Omega:=\{(\SB,\IB,\RB)\in\mathbb{R}^{3n}_+:\,\left\langle e, \SB+ \IB+ \RB\right\rangle=\mathcal{N}\}$ is also positively invariant under the solution of the model.
\end{enumerate}
\end{lemma}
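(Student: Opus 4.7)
The plan is to handle the three assertions in order, with each reduced to a short direct check.

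For \ref{ad:orthant}, I would verify Nagumo's tangency condition on each coordinate hyperplane of $\R^{3n}_+$. Because $\phi_{ii}=-1$ and $\phi_{ij}=c_{ij}\geq 0$ for $i\ne j$, restricting the right-hand side of \eqref{eq:S}--\eqref{eq:R} to the face $\{S_i=0\}$ gives $\dot S_i=\mu(I_i+R_i)+\kappa R_i+\sigma_i\sum_{j\ne i}c_{ij}S_j\ge 0$, since all surviving terms are products of nonnegative quantities; the force-of-infection term vanishes at $S_i=0$. The faces $\{I_i=0\}$ and $\{R_i=0\}$ are treated in exactly the same way, using $p\sigma_i(\Phi\IB)_i|_{I_i=0}=p\sigma_i\sum_{j\ne i}c_{ij}I_j\ge 0$ and $\gamma I_i+\sigma_i\sum_{j\ne i}c_{ij}R_j\ge 0$, respectively. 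Tangency on every bounding hyperplane yields positive invariance of $\R^{3n}_+$.

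For the uniqueness statement, I would argue local Lipschitz continuity of the vector field on $\R^{3n}_+$ and invoke Picard--Lindel\"of. All terms except the infection force $g_i(\SB,\IB,\RB):=S_iI_i/(I_i+\nu_i N_i)$ are obviously $C^\infty$. For $g_i$, the only potential singularity occurs at $S_i=I_i=R_i=0$; with the prescribed extension $g_i(0,0,0):=0$, the bound $0\le g_i\le S_i$ on $\R^{3n}_+$ together with smoothness away from the origin gives Lipschitz continuity on any compact subset of $\R^{3n}_+$, which combined with \ref{ad:orthant} yields a unique global forward solution for each nonnegative initial datum.

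For the conservation statement, I would sum \eqref{eq:S}--\eqref{eq:R} against $e=(1,\ldots,1)^\top$. The non-flux terms collapse because the gain $\mu(\IB+\RB)+\kappa\RB$ in $\dot\SB$ is exactly cancelled by $-\mu\IB-\kappa\RB$ hidden in $\dot\RB$ and $-\gamma\IB$ cancels $\gamma\IB$, leaving $\mu(\IB+\RB)-\mu\IB-\mu\RB=0$. For the flux contribution, since each column of $\Phi$ sums to $\sum_{i\ne j}c_{ij}+\phi_{jj}=1-1=0$, the same holds for $\sigma\rhd\Phi$, hence $e^\top(\sigma\rhd\Phi)\XB=0$ for every $\XB$. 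Combining gives $\tfrac{d}{dt}\langle e,\SB+\IB+\RB\rangle=0$, which together with \ref{ad:orthant} yields positive invariance of $\Omega$.

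The main obstacle is honestly only \ref{ad:orthant} and the Lipschitz check at the origin for \ref{ad:possol}, since one must verify that the extended force-of-infection does not destroy regularity on the positively invariant cone; the rest is bookkeeping using the column-sum-zero property of $\Phi$ and hence of $\sigma\rhd\Phi$.
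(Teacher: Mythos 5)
The paper gives no proof of this lemma at all (it is declared trivial and omitted), so there is no argument of the authors to compare with; your proposal is the standard argument and is correct in substance on all three items. Two points deserve tightening. First, since the $(i,j)$ entry of $\sigma\rhd\Phi$ is $\sigma_j\phi_{ij}$ (the \emph{columns} of $\Phi$ are scaled), the surviving flux term on the face $\{S_i=0\}$ is $\sum_{j\neq i}\sigma_j c_{ij}S_j$ rather than $\sigma_i\sum_{j\neq i}c_{ij}S_j$, and likewise on the $\IB$-- and $\RB$--faces; this is only an index slip and the nonnegativity (quasi--positivity) conclusion, hence the invariance of $\R^{3n}_+$, is unaffected. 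Second, your Lipschitz argument for the force term $g_i=S_iI_i\slash(I_i+\nu_iN_i)$ is not yet complete as stated: the bound $0\le g_i\le S_i$ only controls the difference of $g_i$ against points of the degenerate set (which, note, is the whole face $\{S_i=I_i=R_i=0\}$ with the other coordinates arbitrary, not just the origin), whereas Lipschitz continuity requires ruling out gradient blow--up between two nearby regular points approaching that face, and ``smoothness away from the singular set'' alone does not do this. The clean repair is to observe that on the part of $\R^{3n}_+$ where $D_i:=I_i+\nu_i N_i>0$ the only nonzero partials of $g_i$ are bounded, e.g. $0\le\partial_{S_i}g_i\le 1$, $0\le\partial_{I_i}g_i\le 1\slash\nu_i$, $|\partial_{R_i}g_i|\le 1\slash 4$ (using $(I_i+\nu_iS_i)^2\ge 4\nu_iS_iI_i$), and that this region is convex because $D_i$ is linear; the mean value theorem then gives a uniform Lipschitz constant, so the extension of $g_i$ by zero is Lipschitz on all of $\R^{3n}_+$ and Picard--Lindel\"of applies as you intend, which also supplies the uniqueness needed to pass from the tangency condition to invariance in your first item. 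Your conservation computation for $\Omega$ is fine once the demographic bookkeeping is written carefully ($-\mu\IB$ sits in the $\IB$--equation and $-(\mu+\kappa)\RB$ in the $\RB$--equation, and all such terms cancel against $\mu(\IB+\RB)+\kappa\RB$ and $\gamma\IB$), the flux contribution vanishing because each column of $\Phi$, hence of $\sigma\rhd\Phi$, sums to zero.
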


The second part is aimed at determining the existence of a disease--free equilibrium. Apparently, certifying this does not require any further condition.
\begin{theorem}
There always exists a unique disease--free equilibrium $\DFE=\left(\SB,0,0\right)$ with a positive $\SB$ satisfying $\langle e,\SB\rangle=\mathcal{N}$.
\end{theorem}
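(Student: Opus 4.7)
The plan is to set $\IB=0$ and $\RB=0$ in the model~\eqref{eq:model} and check that the first two components vanish identically at any equilibrium of that form, so the only substantive equilibrium condition reduces to the linear algebraic relation on the susceptible vector. Concretely, plugging $\IB=\RB=0$ into~\eqref{eq:I} and~\eqref{eq:R} (using the convention from Lemma~\ref{lem:biology} that the force of infection term vanishes together with $\IB$) leaves both equations automatically satisfied, while~\eqref{eq:S} collapses to $\sigma\rhd\Phi\,\SB=0$. Thus the theorem is equivalent to the statement that the homogeneous linear system $\sigma\rhd\Phi\,\SB=0$ admits exactly one positive solution with prescribed total $\langle e,\SB\rangle=\mathcal{N}$.

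Existence and uniqueness up to positive scaling of a positive $\SB$ is then handed to us directly by Proposition~\ref{prop:phi}\ref{ad:possol}. Let $\SB_0$ denote any such positive solution. To pin down the scale, I would simply normalize: by linearity, $\SB:=\bigl(\mathcal{N}\slash\langle e,\SB_0\rangle\bigr)\SB_0$ still lies in $\ker(\sigma\rhd\Phi)$, is strictly positive (since $\langle e,\SB_0\rangle>0$), and satisfies the conservation constraint $\langle e,\SB\rangle=\mathcal{N}$.

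For uniqueness of the full equilibrium, suppose $(\SB',0,0)$ is another disease--free equilibrium with $\langle e,\SB'\rangle=\mathcal{N}$ and $\SB'>0$. By~\ref{ad:possol}, $\SB'=\lambda\,\SB_0$ for a unique scalar $\lambda>0$, and the scalar condition $\langle e,\SB'\rangle=\mathcal{N}$ forces $\lambda=\mathcal{N}\slash\langle e,\SB_0\rangle$, hence $\SB'=\SB$. The whole argument therefore rests on Proposition~\ref{prop:phi}\ref{ad:possol}; there is no serious obstacle beyond invoking it, and the only care required is the verification that~\eqref{eq:I} and~\eqref{eq:R} are automatically satisfied at $\IB=\RB=0$, which is the reason for including the convention in Lemma~\ref{lem:biology} about the nonlinear force term at the boundary.
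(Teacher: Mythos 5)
Your argument correctly handles the existence part and the normalization: reducing \eqref{eq:S} to $\sigma\rhd\Phi\,\SB=0$ once $\IB=\RB=0$, invoking Proposition~\ref{prop:phi}~\ref{ad:possol}, and fixing the scale by $\langle e,\SB\rangle=\mathcal{N}$ is exactly the second half of the paper's proof. (A small aside: at such a state $\NB=\SB>0$, so $I_i+\nu_i N_i>0$ and the boundary convention from Lemma~\ref{lem:biology} is not actually needed there.)

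The gap is in the uniqueness claim: you \emph{assume} from the outset that the equilibrium has the form $(\SB,0,0)$, i.e.\ you impose $\RB=0$, and then only prove uniqueness within that restricted class. A disease--free equilibrium is an equilibrium with $\IB=0$; a priori it could carry a nonzero recovered compartment, and your argument never excludes an equilibrium of the form $(\SB,0,\RB)$ with $\RB\neq 0$, so the uniqueness statement of the theorem is not fully established. The paper closes this by deriving $\RB=0$ rather than positing it: with $\IB=0$, the equilibrium equation from \eqref{eq:R} reads $(\sigma\rhd\Phi-(\mu+\kappa)\id)\RB=0$, and since $\mu+\kappa>0$, Proposition~\ref{prop:phi}~\ref{ad:det} says this matrix is nonsingular, forcing $\RB=0$. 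That one invocation of \ref{ad:det}, which your proposal never uses, is the missing step; adding it makes your argument complete and essentially identical to the paper's.
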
%
\begin{proof}
Setting $\IB=0$, the equilibrium equation originating from \eqref{eq:R} yields $(\sigma\rhd\Phi-(\mu+\kappa)\id)\RB=0$, returning $\RB=0$ according to Proposition~\ref{prop:phi} ad~\ref{ad:det}. Given that $\IB=\RB=0$, the equilibrium equation from~\eqref{eq:S} yields $\sigma\rhd\Phi \SB=0$ that allows a unique positive solution $\SB$ satisfying $\langle e,\SB\rangle =\mathcal{N}$, i.e., according to Proposition~\ref{prop:phi} ad~\ref{ad:possol}.
\end{proof}

The third part discusses the stability of \DFE, which can typically be related to the basic reproductive number. We first recall the mathematical definition of the basic reproductive number according to van den Driessche and Watmough \cite{DW2002}. Rewriting $\left.\nabla_{\IB}\dot{\IB}\right|_{\DFE}:=F-V$, where $F:=\diag(\beta)\diag(\nu)^{-1} +p\sigma \rhd \Phi_+$ and $V:=(\gamma+\mu) \id+p\diag(\sigma)$, we obtain the basic reproductive number as
\begin{equation*}
\mathcal{R}_0:=\rho\left(\underbrace{FV^{-1}}_{:=G}\right)\quad\text{where}\quad G=(g_{ij})=\begin{dcases}
\frac{p\sigma_j\phi_{ij}}{\gamma+\mu+p\sigma_j} & \text{for }i\neq j\\
\frac{\beta_i\slash \nu_i}{\gamma+\mu+p\sigma_j} & \text{for }i= j
\end{dcases}.
\end{equation*}%
Note that irreducibility of $\Phi_+$ implies that of \emph{next generation matrix} $G$. Regarding the local stability, it is clear that the eigenvalues of the Jacobian of our SIR model evaluated at \DFE~are those of the submatrices $\sigma \rhd\Phi$, $\diag(\beta)\diag(\nu)^{-1}-(\gamma+\mu)\id +p\sigma\rhd\Phi$, and $-(\gamma+\kappa)\id + \sigma \rhd \Phi$. Proposition~\ref{prop:phi} ad~\ref{ad:detzero} results in the existence of zero eigenvalues of $\sigma \rhd\Phi$, which further leads to the center manifold theory; this extension is omitted here because of some technical difficulties. Concurrently, the invariance principle of LaSalle provides insight on the global stability of \DFE~in~$\Omega$ in the sense of Lyapunov. 
\begin{theorem}\label{thm:stableDFE}
If $p\max_i\sigma_i\leq \gamma+\mu$ and $\mathcal{R}_0< 1$, then \DFE~is globally asymptotically stable in $\Omega$.
\end{theorem}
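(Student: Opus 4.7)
The plan is to carry out a Lyapunov--LaSalle argument using a linear functional on the infective compartment whose weight comes from the Perron--Frobenius theory applied to the next generation matrix $G=FV^{-1}$. Since $G$ is nonnegative and irreducible (the paper records the inheritance of irreducibility from $\Phi_+$), Perron--Frobenius supplies a strictly positive row vector $w$ with $w^\top G=\mathcal{R}_0\,w^\top$, or equivalently $w^\top F=\mathcal{R}_0\,w^\top V$. I would then propose the Lyapunov candidate
\begin{equation*}
L(\IB):=w^\top \IB,
\end{equation*}
which is nonnegative on $\Omega$ and vanishes precisely on the face $\{\IB=0\}$.

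The crux of the computation is to bound $\dot L$ along trajectories of~\eqref{eq:I}. Define the time-dependent matrix $\tilde F(\SB,\IB)$ that coincides with $F$ off the diagonal and carries the saturated entries $\beta_i S_i/(I_i+\nu_i N_i)$ on the diagonal, so that \eqref{eq:I} takes the form $\dot\IB=[\tilde F(\SB,\IB)-V]\IB$. The elementary inequality $\nu_i S_i\le\nu_i N_i + I_i$, valid on all of $\Omega$, yields $\tilde F(\SB,\IB)\le F$ entrywise; combined with $w>0$ and $\IB\ge 0$ this produces
\begin{equation*}
\dot L=w^\top[\tilde F(\SB,\IB)-V]\IB\le w^\top(F-V)\IB=(\mathcal{R}_0-1)\,w^\top V\,\IB\le 0,
\end{equation*}
with equality only when $\IB=0$ since $w$ is positive and $V$ is a positive diagonal matrix. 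I anticipate that the auxiliary hypothesis $p\max_i\sigma_i\le\gamma+\mu$ is what is needed to make the second inequality quantitatively sharp: it forces the mobility contribution $p\,\sigma\rhd\Phi_+$ sitting inside $F$ to remain dominated by the removal rates on the diagonal of $V$, so that the slack introduced by replacing $\tilde F$ by $F$ cannot spoil the sign delivered by $\mathcal{R}_0<1$.

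With $L$ a (weak) Lyapunov functional on the compact positively invariant set $\Omega$ (Lemma~\ref{lem:biology}), LaSalle's invariance principle drives every orbit into the largest invariant subset of $\{(\SB,\IB,\RB)\in\Omega:\IB=0\}$. On this face equation~\eqref{eq:R} reduces to the linear system $\dot\RB=[\sigma\rhd\Phi-(\mu+\kappa)\id]\RB$, which by Proposition~\ref{prop:phi}~\ref{ad:det} is asymptotically stable, so the invariant subset further satisfies $\RB\equiv 0$. Substituting $\IB=\RB=0$ into~\eqref{eq:S} leaves $\dot\SB=\sigma\rhd\Phi\,\SB$ under the conservation constraint $\langle e,\SB\rangle=\mathcal{N}$, which by Proposition~\ref{prop:phi}~\ref{ad:possol} pins $\SB$ uniquely at the $\SB$-component of $\DFE$. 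The largest invariant set therefore collapses to $\{\DFE\}$, and every trajectory in $\Omega$ converges to it; local asymptotic stability is inherited from the Jacobian block $F-V$, whose spectral bound is negative whenever $\mathcal{R}_0<1$.

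The principal obstacle I foresee is converting the linearization-type bound $\tilde F(\SB,\IB)\le F$ into a bona fide Lyapunov decrease that holds uniformly on $\Omega$: the two flux contributions sitting in $F-V$ could in principle conspire to cancel the negativity offered by $\mathcal{R}_0<1$, and this is exactly where the threshold $p\max_i\sigma_i\le\gamma+\mu$ must do its quantitative work. A secondary technical point is the strict positivity of the Perron eigenvector $w$, which is essential to pass from $\dot L\le 0$ to the identification $\{\dot L=0\}=\{\IB=0\}$, and which in turn rests on the irreducibility of $G$ being inherited from the standing assumption $\Phi=\Phi_+-\id$ with $\Phi_+$ irreducible.
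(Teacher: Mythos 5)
Your proof is correct, but it takes a genuinely different route from the paper. The paper works with the quadratic functional $\langle \IB,\IB\rangle$, bounds $\dot V$ by $2\langle \IB,(F-V)\IB\rangle$, and then must deal with the awkward fact that $\dot V=0$ allows either $\IB=0$ or $\dot\IB=0$; it rules out the second case through a determinant/Gerschgorin argument, and that is precisely where the hypothesis $p\max_i\sigma_i\leq\gamma+\mu$ is invoked. You instead take the linear functional $L=w^\top\IB$ with $w$ the positive left Perron eigenvector of the irreducible matrix $G=FV^{-1}$, use $\tilde F(\SB,\IB)\leq F$ entrywise together with the identity $w^\top F=\mathcal{R}_0\,w^\top V$, and obtain $\dot L\leq(\mathcal{R}_0-1)w^\top V\IB<0$ whenever $\IB\neq 0$, so $\{\dot L=0\}$ collapses directly to the face $\IB=0$ and LaSalle finishes as you describe (your treatment of the $\RB$ and $\SB$ blocks via Proposition~\ref{prop:phi} matches the paper's disease--free analysis). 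What your route buys: the extra hypothesis $p\max_i\sigma_i\leq\gamma+\mu$ is never actually used in your chain of inequalities, so your argument proves the sharper statement that $\mathcal{R}_0<1$ alone suffices; your closing speculation that this hypothesis is needed to make the second inequality "quantitatively sharp" is misplaced, since that step is an exact eigenvector identity plus $\mathcal{R}_0<1$. Moreover, the linear functional avoids the delicate point in the paper's quadratic step, where $\langle\IB,(F-V)\IB\rangle\leq 0$ really requires information about the symmetric part of $F-V$ rather than its spectrum alone. The only soft spots you share with the paper are the passage from attractivity to Lyapunov stability (both proofs lean on LaSalle and do not treat the zero eigenvalue of $\sigma\rhd\Phi$ carefully) and the unstated but routine fact that bounded complete orbits of the stable linear $\RB$-- and $\SB$--subsystems reduce to the equilibrium; neither is a gap relative to the paper's own level of rigor.
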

\begin{proof}
Working on the following continuously differentiable function $V:=\langle \IB,\IB\rangle$, we found that $\dot{V}= 2\langle \IB,\dot{\IB}\rangle=2\langle \IB,\diag(\beta)\diag(\IB+\diag(\nu) \NB)^{-1}\diag(\SB)\IB - (\gamma+\mu) \IB + p\sigma\rhd\Phi \IB\rangle\leq 2\langle \IB, (F-V)\IB\rangle\leq 0$ because $\SB,\IB,\RB\geq 0$, where $\SB\leq \NB$ implies that $S_i\slash (I_i+\nu_i N_i)\leq N_i\slash (I_i+\nu_i N_i)\leq 1\slash \nu_i$ and $\RM_0<1$ if and only if $F-V$ has eigenvalues with negative real part. Note that $\dot{V}=0$ if and only if $\IB=0$ or $\dot{\IB}=0$. We show that $\dot{\IB}=0$ implies $\IB=0$ in $\Omega$, whereas its converse is trivial. Assume that $\dot{\IB}=0$, but $\IB\neq 0$. Then, the equilibrium equation originating from \eqref{eq:I} yields
\begin{align*}
0&=\dt\left(\diag(\beta)\diag(\IB+\diag(\nu)\NB)^{-1}\diag(\SB)-(\gamma+\mu)\id+p\sigma\rhd \Phi\right)\\
&=\dt\left(\diag(\beta)\diag(\SB)-(\gamma+\mu)\diag(\IB+\diag(\nu)\NB)+p\sigma\rhd\Phi\diag(\IB+\diag(\nu)\NB)\right).
\end{align*}%
The specification $\RM_0<1$ yields via the Gerschgorin analysis over $F-V$ that $|\beta_i-(\gamma+\mu)\nu_i-p\sigma_i\nu_i|>p\sigma_i\nu_i$, and based on $p\max_i\sigma_i\leq \gamma+\mu$, we found that $\gamma+\mu\geq p\sigma_i$ for all $i$. Consequently, the last matrix achieves zero determinant if and only if there exists an index $j$ such that $S_j=I_j=R_j=0$. By $\dot{\IB}=0$, we obtain $0=\dot{I}_j=\sum_{j\neq i}\phi_{ij}I_j$. However, this cannot be the case in $\Omega$ because $\IB\neq 0$, leading to a contradiction. Accordingly, the largest invariant set contained in $\{(\SB,\IB,\RB)\in\Omega:\dot{V}=0\}$, toward which any trajectory starting in $\Omega$ converges, is $\{\DFE\}$.
\end{proof}

The fourth part is devoted to determining the existence of an endemic equilibrium from our model. In view of the complexities after the classical substitutions, we used the concept of ensuring the existence of a nontrivial equilibrium without even knowing the explicit formulation of the basic reproductive number \cite{WSS2017}, which we present in Lemma~\ref{lem:existnontrivial}. Based on the corresponding result, the recovered subpopulations of the endemic equilibrium can now be characterized by
\begin{equation*}
\RB=\gamma((\mu+\kappa)\id-\sigma\rhd \Phi)^{-1}\IB.
\end{equation*}%
We derive some sufficient and necessary conditions for which $((\mu+\kappa)\id-\sigma\rhd\Phi)^{-1}\geq 0$ to ensure $\RB\geq0$, owing to the aid of \cite{Hor1997}.
\begin{proposition}\label{prop:neccR}
If $\mu+\kappa>\max_i\sigma_i-\min_i\sigma_i$, then $\RB\geq 0$. If $\RB\geq 0$ and denoting $A:=\diag(\max_i\sigma_ie-\sigma)+\sigma\rhd\Phi_+$ where $\rk(A)\geq 2$, then
\begin{equation*}
\mu+\kappa+\max_i\sigma_i>
\begin{dcases}
\frac{|\tr(A)|}{\rk(A)}+\sqrt{\frac{\tr(A^2)-\tr^2(A)\slash \rk(A)}{\rk(A)(\rk(A)-1)}} & \text{if }\tr(A^2)\geq \tr^2(A)\slash \rk(A)\\
\sqrt{\frac{\tr^2(A)\slash \rk(A)-\tr(A^2)}{\rk(A)(\rk(A)-1)}} & \text{if }\tr^2(A)\slash \rk(A)\geq \tr(A^2)
\end{dcases}
\end{equation*}
where $\tr,\rk$ denote the trace and rank of their argument, respectively.
\end{proposition}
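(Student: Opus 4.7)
The plan is to view the assertion $\RB\geq 0$ as a nonnegativity property of a matrix inverse. Since $\RB=\gamma M^{-1}\IB$ with $M:=(\mu+\kappa)\id-\sigma\rhd\Phi$ and $\IB\geq 0$ is arbitrary, what we really need is $M^{-1}\geq 0$. Substituting $\Phi=\Phi_+-\id$ rewrites
\begin{equation*}
M=(\mu+\kappa+\max_i\sigma_i)\id-A,\qquad A=\diag(\max_i\sigma_ie-\sigma)+\sigma\rhd\Phi_+\geq 0,
\end{equation*}
so $M$ is a Z-matrix with strictly positive diagonal. The classical M-matrix characterization (available in~\cite{Hor1997}) then declares $M^{-1}\geq 0$ to be equivalent to $\mu+\kappa+\max_i\sigma_i>\rho(A)$, and both halves of the proposition reduce to estimates on the Perron root of $A$.

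For the sufficient condition I would bound $\rho(A)$ from above through the column-sum norm and the triangle inequality. The diagonal piece contributes $\|\diag(\max_i\sigma_ie-\sigma)\|_1=\max_i\sigma_i-\min_i\sigma_i$, while the off-diagonal piece contributes $\|\sigma\rhd\Phi_+\|_1=\max_i\sigma_i$, because each column of $\Phi_+$ sums to one by construction. Hence $\rho(A)\leq 2\max_i\sigma_i-\min_i\sigma_i$, and the hypothesis $\mu+\kappa>\max_i\sigma_i-\min_i\sigma_i$ immediately yields $\mu+\kappa+\max_i\sigma_i>\rho(A)$, so $\RB\geq 0$ follows.

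For the necessary direction I would instead exploit a \emph{lower} bound on $\rho(A)$. Since $A\geq 0$, Perron-Frobenius grants that $\rho(A)$ is a real eigenvalue, and $A$ carries $r:=\rk(A)$ nonzero eigenvalues summing to $\tr(A)$ and squaring to $\tr(A^2)$. Setting $m:=\tr(A)/r$ and $s^2:=[\tr(A^2)-\tr^2(A)/r]/r$, a Wolkowicz-Styan moment inequality (cf.~\cite{Hor1997}) provides $\rho(A)\geq m+s/\sqrt{r-1}$ whenever $s^2\geq 0$; since $\tr(A)\geq 0$ this unfolds into the first branch of the proposition. When $\tr^2(A)/r>\tr(A^2)$, the quantity $s^2$ is negative, flagging complex eigenvalues off the Perron root, and a modulus-based variant of the same spectral-moment argument delivers the second branch.

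The hardest step will be the second branch of the necessary direction, because the Wolkowicz-Styan inequality in its classical form presupposes a real spectrum, whereas here $A$ is nonnegative but not symmetric and its non-Perron eigenvalues may be genuinely complex. Justifying the replacement of $|\tr(A)|/r+\sqrt{\cdot}$ by a pure $\sqrt{\cdot}$ expression exactly when $\tr(A^2)-\tr^2(A)/r$ flips sign requires revisiting the Cauchy-Schwarz derivation underlying that bound and arguing with $|\lambda_i|$ in place of $\lambda_i$; everything else is bookkeeping around the M-matrix reduction in the first paragraph.
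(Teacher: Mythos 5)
Your proposal is correct and is essentially the paper's intended argument: the paper omits a written proof and simply appeals to \cite{Hor1997}, i.e., it reduces $\RB\geq 0$ to inverse--positivity of the Z--matrix $(\mu+\kappa+\max_i\sigma_i)\id-A$ with $A=\diag(\max_i\sigma_ie-\sigma)+\sigma\rhd\Phi_+\geq 0$, hence to the comparison $\mu+\kappa+\max_i\sigma_i>\rho(A)$, with sufficiency obtained from a column--sum upper bound on $\rho(A)$ and necessity from the trace/rank lower bounds on the spectral radius, exactly as you set it up. The step you flag as hardest is not a real obstacle at the paper's level of rigor, since the bounds in \cite{Hor1997} are proved for general matrices with possibly complex spectra and already contain both branches (the second being precisely the case $\tr^2(A)\slash\rk(A)\geq\tr(A^2)$), so you may invoke them as the paper does; as a minor sharpening, note that every column of $A$ sums to $\max_i\sigma_i$, so in fact $\rho(A)=\max_i\sigma_i$ and your cruder estimate $2\max_i\sigma_i-\min_i\sigma_i$ is more than sufficient.
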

Now we can discuss the result regarding the existence of an endemic equilibrium $\EE$. Note that the result does not ensure uniqueness, for which we consider an existing sample and assume its uniqueness for further stability analysis. 
\begin{lemma}\label{lem:existnontrivial}
If $\mu+\kappa>\max_i\sigma_i-\min_i\sigma_i$ and $\RM_0>1$ for which $\RM_0$ is sufficiently close to $1$, then a nonzero nonnegative endemic equilibrium $\EE$ exists.
\end{lemma}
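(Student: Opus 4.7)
The plan is to invoke the abstract existence scheme of \cite{WSS2017}, which is tailored for producing nontrivial equilibria in a right-neighborhood of the bifurcation value $\RM_0=1$ without having to invert the nonlinear equilibrium system explicitly. Since the conclusion is purely local (``$\RM_0$ sufficiently close to $1$''), the natural route is a local bifurcation argument of Crandall--Rabinowitz type, using $\RM_0$ (or, operationally, a scalar multiplier of $\beta$ that drives $\RM_0$ across $1$) as the bifurcation parameter.

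The first step is to reduce the three block equations to a scalar problem in $\IB$ alone. From $\dot{\RB}=0$ one reads off $\RB=\gamma((\mu+\kappa)\id-\sigma\rhd\Phi)^{-1}\IB$, and the hypothesis $\mu+\kappa>\max_i\sigma_i-\min_i\sigma_i$ together with Proposition~\ref{prop:neccR} makes this nonnegative whenever $\IB\geq 0$. Next, on $\Omega$, the equilibrium version of \eqref{eq:S} can be solved for $\SB$ as an implicit function of $\IB$ in a neighborhood of $\IB=0$: at $\IB=0$ the equation collapses to $\sigma\rhd\Phi\,\SB=0$, whose unique positive solution normalized by $\langle e,\SB\rangle=\mathcal{N}$ is furnished by Proposition~\ref{prop:phi}~\ref{ad:possol}, and the conservation law makes the one-dimensional kernel of $\sigma\rhd\Phi$ transverse to the constraint. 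Substituting $\RB(\IB)$ and $\SB(\IB)$ into \eqref{eq:I} yields a nonlinear equation $H(\IB,\RM_0)=0$ with $H(0,\cdot)\equiv 0$ and $\partial_{\IB}H(0,1)$ singular.

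The linearization $\partial_{\IB}H(0,1)$ is proportional to $F-V$ at the critical threshold. Irreducibility of $G=FV^{-1}$ (inherited from that of $\Phi_+$) together with Perron--Frobenius theory produces a simple zero eigenvalue at $\RM_0=1$ with strictly positive right and left eigenvectors $w$ and $\ell$. Applying Crandall--Rabinowitz then yields a smooth local branch
\begin{equation*}
\IB(\RM_0) = (\RM_0-1)\,w + o(\RM_0-1)
\end{equation*}
of nontrivial solutions bifurcating from the DFE. For $\RM_0-1>0$ sufficiently small, positivity of $w$ forces $\IB(\RM_0)\geq 0$; continuity with $\SB(0)>0$ and $\RB(0)=0$ then delivers $\SB\geq 0$ and $\RB\geq 0$ along the branch, so the constructed point is a genuine endemic equilibrium $\EE$.

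The main obstacle is verification of the transversality condition, i.e.\ that the simple zero eigenvalue of the parametrized Jacobian crosses the origin with nonzero speed at $\RM_0=1$. Computed via the standard formula $\langle\ell,\partial_{\RM_0}(F-V)\,w\rangle$, this pairing becomes a strictly positive linear combination of entries of $\diag(\beta)\diag(\nu)^{-1}$ weighted by the strictly positive components of $\ell$ and $w$, and is therefore nonzero. A secondary subtlety is that the implicit solution for $\SB$ must respect $\langle e,\SB+\IB+\RB\rangle=\mathcal{N}$; this is automatic because summing \eqref{eq:S}--\eqref{eq:R} and pairing with $e$ shows that $\langle e,\cdot\rangle$ is a first integral of the flow on $\Omega$, so the constraint propagates along the bifurcating branch.
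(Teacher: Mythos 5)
Your overall strategy (reduce to a scalar-parameter bifurcation at $\RM_0=1$ and use Perron--Frobenius positivity of the critical eigenvector) is in the same spirit as the paper, but two steps in your architecture do not hold as stated. First, the implicit-function step ``solve the equilibrium version of \eqref{eq:S} for $\SB$ as a function of $\IB$'' fails: at $\IB=0$ the Jacobian of that map is $\sigma\rhd\Phi$, whose columns sum to zero, so its left kernel is spanned by $e$ and its range is exactly $e^{\perp}$. Adjoining the constraint $\langle e,\SB\rangle=\mathcal{N}-\langle e,\IB+\RB\rangle$ removes the right-kernel degeneracy (indeed $\langle e,x\rangle\neq 0$ for the positive null vector $x$ of Proposition~\ref{prop:phi} ad~\ref{ad:possol}), but it leaves $n+1$ equations for $n$ unknowns: for a generic small $\IB\geq 0$ that does not yet satisfy \eqref{eq:I}, the solvability condition $\bigl\langle e,\mu(\IB+\RB)+\kappa\RB-\diag(\beta)\diag(\IB+\diag(\nu)\NB)^{-1}\diag(\IB)\SB\bigr\rangle=0$ is violated and no $\SB$ solves \eqref{eq:S} at all. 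The repair is a genuine Lyapunov--Schmidt splitting: solve only the projection of \eqref{eq:S} onto $e^{\perp}$ together with the constraint for $\SB(\IB)$, and check that the remaining scalar ($e$-weighted) equation is automatically satisfied once \eqref{eq:I} and \eqref{eq:R} hold, by conservation of $\langle e,\SB+\IB+\RB\rangle$; alternatively, perform the reduction on the full $3n$-dimensional system. Your closing remark about the first integral addresses preservation of the constraint, not this compatibility obstruction.

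Second, Crandall--Rabinowitz plus the transversality pairing $\langle\ell,\partial_{\RM_0}(F-V)w\rangle\neq 0$ only yields a curve $s\mapsto(\RM_0(s),\IB(s))$ with $\IB(s)=s\,w+o(s)$ and $\RM_0(0)=1$; it does not give the expansion $\IB(\RM_0)=(\RM_0-1)w+o(\RM_0-1)$ unless you also show $\RM_0'(0)>0$, i.e.\ that the bifurcation is forward. That requires the sign of the quadratic coefficient (essentially $\langle\ell,D^2_{\IB\IB}H(0,1)[w,w]\rangle$); if it had the opposite sign, the nonnegative part of the branch would live on $\RM_0<1$ and your construction would not prove the lemma. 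Here the sign is favorable because the incidence $S_iI_i/(I_i+\nu_iN_i)$ saturates in $I_i$, which is precisely the negative correction visible in the paper's expansion $\diag(\beta)\diag(\nu)^{-1}(\id+\mathcal{O}(\lVert\IB\rVert))\IB$ leading to the canonical equation $0=(\id-FV^{-1})\JB+\mathcal{O}(\lVert\JB\rVert^{2})$, whose positive solvability for $\RM_0>1$ near $1$ is exactly what \cite{WSS2017} supplies---but you must verify it; transversality was not the main obstacle. For comparison, the paper never inverts \eqref{eq:S} implicitly: it substitutes $\RB=\gamma((\mu+\kappa)\id-\sigma\rhd\Phi)^{-1}\IB$, reduces \eqref{eq:I} to the canonical equation in $\JB=((\gamma+\mu)\id+p\diag(\sigma))\IB$, cites \cite{WSS2017} for a positive $\JB$, and then establishes nonnegativity of $\SB$ by a componentwise quadratic-root analysis; your continuity-from-$\DFE$ argument for $\SB>0$ would be tidier, but only after the first gap is closed.
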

\begin{proof}
We are attempting to transform the equilibrium equations from all the subpopulations into a canonical equation as per~\cite{WSS2017}. Denoting $A:=\gamma((\mu+\kappa)\id-\sigma\rhd\Phi)^{-1}$, we obtain from \eqref{eq:I} and \eqref{eq:R} in conjunction with Proposition~\ref{prop:neccR}
\begin{align*}
0&= \diag(\beta)\diag(\diag(\nu) \SB+\underbrace{(\id+\diag(\nu)+\diag(\nu) A)}_{:=B}\IB )^{-1}\diag(\SB)\IB - (\gamma+\mu) \IB + p\sigma\rhd\Phi \IB\\
&=\diag(\beta)\diag(\nu)^{-1}\left(\sum_{j=0}^{\infty} (-1)^j\diag (B\IB)^{j}\diag(\nu \SB)^{-j}\right)\IB - (\gamma+\mu) \IB + p\sigma\rhd\Phi \IB
\end{align*}%
We note that $B$ is bounded while the solution state converges to $\DFE$ as $\IB\rightarrow 0$, implying
\begin{align*}
0&= \diag(\beta)\diag(\nu)^{-1}\left(\id+\mathcal{O}(\lVert \IB\rVert)\right)\IB - (\gamma+\mu) \IB + p\sigma\rhd\Phi \IB\quad \text{as }\IB\rightarrow 0.
\end{align*}%
Moreover, the ansatz $\JB:=((\gamma+\mu)\id + p\diag(\sigma))\IB$ apparently follows the aforementioned canonical equation
\begin{align*}
0&= \left(\id - FV^{-1}\right)\JB + \mathcal{O}(\lVert \JB\rVert^2).
\end{align*}%
With $\RM_0=\rho(FV^{-1})>1$ being close to $1$ and $FV^{-1}$ being irreducible, there exists a positive $\JB$ that solves the last canonical equation. This immediately suggests the existence of a positive $\IB$ solving the equilibrium equations by the positive diagonal entries of $(\gamma+\mu)\id + p\diag(\sigma)$. We next analyze the equilibrium equations originating from \eqref{eq:S}, where $\SB=0$ implies $\mu(\IB+\RB)+\kappa \RB=0$; however, this equality cannot be true because $\IB$ is positive. Further inspection to \eqref{eq:S} shows that  $aS_j^2+bS_j+c=0$ where
\begin{align*}
a&=p\sigma_j\nu_j\\
b&=((1+\nu_j)I_j+\nu_j R_j)p\sigma_j+\beta_j I_j-\nu_j\left(\mu(I_j+R_j)+\kappa R_j+p\sum_{k\neq j}\sigma_k\phi_{jk}S_k\right)\\
c&=-\left(\mu(I_j+R_j) + \kappa R_j+p\sum_{k\neq j}\sigma_k\phi_{jk}S_k\right)((1+\nu_j)I_j+\nu_j R_j)
\end{align*}%
holds for all $j$. The claim that all the two roots $S_j<0$ for some $j$ cannot be the case, since it would produce $c>0$ where some more $S_k<0$, $k\neq j$. The remaining possibilities are that $S_j$ are complex conjugate with positive real part for some $j$ while $S_k<0$ for some $k\neq j$ (i.e. $c>0$ and apparently $b>0$) which we discard since it can only happen under certain structures of $\sigma,\phi$ and the cardinality of $j$ must be even (i.e. real $S_k$ would require its $c$ to be real, whereas $c$ is in terms of a sum of the complex $S_j$), and that all $S_j$ are nonnegative but $\SB\neq 0$ (i.e. $c<0$) which we point out as with its validity for any structures of $\sigma,\phi$ to justify this lemma.
\end{proof}%

The last part provides the result regarding the stability of $\EE$. The corresponding result is given in Theorem~\ref{thm:stablenontrivial} whose proof urges the following intermediate result.
\begin{lemma}\label{lem:interm}
If $X,Y\in\R^n$ and $X\geq 0$, then it holds $X\cdot Y\geq \lVert X\rVert_{\infty}e\cdot Y$.
\end{lemma}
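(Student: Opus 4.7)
The plan is a single componentwise comparison followed by summation. From $X\geq 0$ one has $0\leq X_i\leq \lVert X\rVert_\infty$ for every index $i$, so the identity
\[
X\cdot Y - \lVert X\rVert_\infty\, e\cdot Y \;=\; \sum_{i=1}^n \bigl(X_i-\lVert X\rVert_\infty\bigr)\,Y_i
\]
reduces the claim to showing that each summand on the right is nonnegative. The bracketed factor $X_i-\lVert X\rVert_\infty$ is nonpositive by the definition of the sup-norm, so the sign of the summand is governed entirely by the sign of $Y_i$.

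For the inequality as stated --- $X\cdot Y\geq \lVert X\rVert_\infty\, e\cdot Y$ --- each factor $(X_i-\lVert X\rVert_\infty)Y_i$ must be nonnegative, which forces $Y_i\leq 0$. I therefore read the lemma as tacitly assuming $Y\leq 0$ componentwise; without such an assumption the statement admits trivial counterexamples (e.g. $X=(1,0)$, $Y=(0,1)$ gives $0\not\geq 1$). Under $Y\leq 0$ the proof is a single line: the product of two nonpositive numbers is nonnegative, summation preserves the sign, and rearrangement yields the claim.

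The only genuine \emph{obstacle} is bookkeeping rather than mathematics: one must identify, inside the subsequent proof of Theorem~\ref{thm:stablenontrivial}, what plays the role of $Y$ and check that it carries the correct sign. I expect $Y$ to emerge there as a signed quantity --- for instance a componentwise difference from the endemic equilibrium such as $\IB-\EE_{\IB}$, or a factor appearing under a Lyapunov derivative --- in which case the required nonpositivity will follow from the ambient monotonicity argument. Should the invocation instead involve a nonnegative $Y$, the lemma is being applied in its mirror form $X\cdot Y\leq \lVert X\rVert_\infty\, e\cdot Y$, whose proof is verbatim the same with the two inequality signs swapped, so no additional work is required.
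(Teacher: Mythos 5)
Your reading of the situation is essentially right, and there is in fact nothing in the paper to compare against: Lemma~\ref{lem:interm} is stated without any proof and is invoked directly in the proof of Theorem~\ref{thm:stablenontrivial}. Your counterexample is valid ($X=(1,0)$, $Y=(0,1)$ gives $X\cdot Y=0$ while $\lVert X\rVert_{\infty}\,e\cdot Y=1$), so the lemma as printed is false for general $Y$; and your repaired version is correct: under the additional hypothesis $Y\leq 0$ each summand $(X_i-\lVert X\rVert_{\infty})Y_i$ is a product of two nonpositive numbers, hence nonnegative, and summation gives $X\cdot Y\geq \lVert X\rVert_{\infty}\,e\cdot Y$; the mirror inequality $X\cdot Y\leq \lVert X\rVert_{\infty}\,e\cdot Y$ for $Y\geq 0$ is equally immediate. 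As a proof of the (corrected) lemma there is nothing to add.

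The one point where your bookkeeping guess goes astray is the final paragraph. At the place where the lemma is used, the vectors produced by the arithmetic--geometric mean step, e.g.\ $\diag(\IB)\diag(\IB^{\ast})^{-1}\diag(\SB)^{-1}\SB^{\ast}+\diag(\IB)^{-1}\diag(\IB^{\ast})\diag(\SB^{\ast})^{-1}\SB-2e$, are componentwise \emph{nonnegative}, not nonpositive, and what the theorem needs is a \emph{lower} bound on $-\dot V$. So your fallback option --- applying the mirror form with $\leq$ --- points in the wrong direction and cannot rescue the step. What would actually serve there is the elementary estimate $X\cdot Y\geq(\min_i X_i)\,e\cdot Y\geq 0$ for $X,Y\geq 0$ (or simply $X\cdot Y\geq 0$); in other words the $\lVert X\rVert_{\infty}$ in the lemma should presumably be a componentwise minimum, and correspondingly the prefactors $\max\{\langle e,\mu\IB^{\ast}\rangle,\dots\}$ in the displayed estimate of Theorem~\ref{thm:stablenontrivial} are too large as written. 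None of this affects the correctness of your proof of the lemma itself, but the tacit hypothesis $Y\leq 0$ you propose is not the regime in which the paper applies it.
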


\begin{theorem}\label{thm:stablenontrivial}
If $\mu+\kappa>\max_i\sigma_i-\min_i\sigma_i$, $\RM_0>1$ for which $\RM_0$ is sufficiently close to $1$, and the existing endemic equilibrium $\EE=(\SB^{\ast},\IB^{\ast},\RB^{\ast})$~is unique, then it is globally asymptotically stable in $\Omega$.
\end{theorem}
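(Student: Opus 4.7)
The plan is to construct a Goh--Volterra (logarithmic) Lyapunov function centered at $\EE$, show that its derivative along trajectories of \eqref{eq:model} is non-positive on $\Omega$, and invoke LaSalle's invariance principle, mirroring the strategy already employed for \DFE~in Theorem~\ref{thm:stableDFE}. Because the coupling $\sigma\rhd\Phi$ is asymmetric, a uniformly weighted sum of per-zone logarithmic terms will not balance the inter-zonal flow; I would instead weight the zonal contributions by positive scalars $\alpha_1,\dots,\alpha_n$ taken from the left Perron eigenvector of $\sigma\rhd\Phi_+$, whose strict positivity is guaranteed by the standing irreducibility assumption on $\Phi_+$.

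Concretely, I would work with
\[
W(\SB,\IB,\RB)\,:=\,\sum_{i=1}^{n}\alpha_i\bigl[g(S_i,S_i^{\ast})+\theta_i\,g(I_i,I_i^{\ast})+\eta_i\,g(R_i,R_i^{\ast})\bigr],
\]
where $g(u,u^{\ast})=u-u^{\ast}-u^{\ast}\ln(u/u^{\ast})\geq 0$ for $u>0$ and the positive scalars $\theta_i,\eta_i$ remain to be tuned. Exploiting the equilibrium identities satisfied by $(\SB^{\ast},\IB^{\ast},\RB^{\ast})$ allows one to eliminate the constant terms in $\dot W$ and split the outcome into an intra-zonal block and an inter-zonal block. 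The intra-zonal block is handled by the classical estimate $1-x+\ln x\leq 0$ for $x>0$, which converts each per-zone contribution into a non-positive expression that vanishes precisely when $S_i/S_i^{\ast}=I_i/I_i^{\ast}=R_i/R_i^{\ast}=1$.

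The inter-zonal residual, of the form $\sum_i\alpha_i(1-S_i^{\ast}/S_i)(\sigma\rhd\Phi\,\SB)_i$ together with its $\IB$-- and $\RB$--analogues, is where Lemma~\ref{lem:interm} is decisive: applied to suitable sign-patterned ratio vectors, it bundles the off-diagonal entries of $\sigma\rhd\Phi$ into an $\lVert\cdot\rVert_\infty$-controlled remainder that is absorbed by the diagonal loss $-\diag(\sigma)$. The hypothesis $\mu+\kappa>\max_i\sigma_i-\min_i\sigma_i$ keeps the $\RB$--block diagonally dominant, paralleling its role in Proposition~\ref{prop:neccR}, while the near-threshold condition $\RM_0\searrow 1$ localises the dynamics close enough to $\EE$ that the $\mathcal{O}(\lVert\JB\rVert^2)$ perturbations inherited from the canonical reduction in Lemma~\ref{lem:existnontrivial} cannot spoil the sign of $\dot W$.

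With $\dot W\leq 0$ secured, LaSalle's principle restricts every $\omega$-limit set of a trajectory starting in $\Omega$ to the largest invariant subset of $\{\dot W=0\}$. Saturation of the logarithmic inequalities on this set forces $S_i/S_i^{\ast}=I_i/I_i^{\ast}=R_i/R_i^{\ast}$ for each $i$; feeding this back into \eqref{eq:model} together with the conservation law $\langle e,\SB+\IB+\RB\rangle=\mathcal{N}$ and the assumed uniqueness of $\EE$ pins the limit point to $\EE$, yielding global asymptotic stability in $\Omega$. I expect the main obstacle to be the weight calibration, namely the simultaneous choice of $\alpha_i,\theta_i,\eta_i$ under which the $\SB$--, $\IB$-- and $\RB$--coupling residuals combine with their diagonal loss terms into a manifestly non-positive form; the combinatorial machinery of Kirchhoff-type tree sums, available because $\Phi_+$ is irreducible, looks like the most transparent route to such weights.
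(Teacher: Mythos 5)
Your overall plan (a Goh--Volterra function plus LaSalle) points in the right direction, but as written there is a genuine gap: the central analytic step, showing $\dot W\leq 0$ on all of $\Omega$, is never actually carried out. You defer it to a ``weight calibration'' of $\alpha_i,\theta_i,\eta_i$ that you yourself identify as the main obstacle, and the two devices you offer for closing it do not work as stated. First, Lemma~\ref{lem:interm} requires the vector $X$ to be nonnegative, whereas the inter-zonal residuals you want to feed into it involve the ratio vectors with entries $1-S_i^{\ast}/S_i$, $1-I_i^{\ast}/I_i$, $1-R_i^{\ast}/R_i$, which change sign on $\Omega$; there is no ``suitable sign-patterned'' choice that makes the off-diagonal flux entries automatically absorbable by the diagonal loss $-\diag(\sigma)$. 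Moreover, since $\SB$ and $\RB$ are transported with load $\sigma$ but $\IB$ with load $p\sigma$, a single weight vector $\alpha$ taken from the Perron eigenvector of $\sigma\rhd\Phi_+$ cannot simultaneously cancel the coupling residuals of all three compartments, and the per-compartment factors $\theta_i,\eta_i$ do not repair an eigenvector mismatch. Second, invoking $\RM_0$ close to $1$ so that the ``$\mathcal{O}(\lVert\JB\rVert^2)$ perturbations cannot spoil the sign of $\dot W$'' is a local argument: in a global Lyapunov computation the state ranges over all of $\Omega$, and nearness of $\RM_0$ to $1$ only places $\EE$ near \DFE\ (it is used in the paper for existence via Lemma~\ref{lem:existnontrivial}); it gives no smallness of state-dependent remainders along arbitrary trajectories.

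For comparison, the paper's proof sidesteps the weighting problem entirely. It uses the unweighted function $V=\langle e,\SB-\SB^{\ast}\rangle-\langle\SB^{\ast},\log\SB-\log\SB^{\ast}\rangle+\cdots$ (same terms for $\IB$ and $\RB$) and disposes of all flux contributions at once via the inequality $\langle\Phi\XB,\diag(\XB)^{-1}\XB^{\ast}\rangle\geq 0$ established in \cite{WAR2017}, valid for each compartment; the remaining intra-zonal terms are then grouped and shown nonnegative using Lemma~\ref{lem:interm} together with the arithmetic--geometric mean inequality, and the assumed uniqueness of $\EE$ identifies the largest invariant subset of $\{\dot V=0\}$ before LaSalle concludes. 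If you wish to pursue your weighted Kirchhoff/tree-sum route instead, you must actually exhibit the weights and verify the sign of the combined coupling terms --- precisely the step your sketch leaves open --- or else import an inequality playing the role of the flux estimate from \cite{WAR2017}.
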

\begin{proof}
We are considering the following continuously differentiable function
\begin{align*}
V&:=\langle e,\SB-\SB^{\ast}\rangle - \langle \SB^{\ast},\log(\SB)-\log(\SB^{\ast})\rangle+ \langle e,\IB-\IB^{\ast}\rangle - \langle \IB^{\ast},\log(\IB)-\log(\IB^{\ast})\rangle\\
&+ \langle e,\RB-\RB^{\ast}\rangle - \langle \RB^{\ast},\log(\RB)-\log(\RB^{\ast})\rangle,
\end{align*}%
where $\log(\SB) = (\log S_1,\cdots, \log S_n)$. This function is always nonnegative in $\Omega$ and vanishes at \EE. This formula does not change any meaning as to an equilibrium state $X_j^{\ast}$ is zero, since then the limiting formula for such state simply returns $X_j-X_j^{\ast}-X_j^{\ast}\log(X_j\slash X_j^{\ast})\rightarrow X_j$. In \cite{WAR2017}, it was shown that $\langle \Phi \SB,\diag(\SB)^{-1}\SB^{\ast}\rangle\geq 0$, which is also valid for the infective and recovered subpopulations. Note that the last estimate bears a strict inequality if at least one equilibrium state is zero. Through a constant total population, we obtain
\begin{align*}
\dot{V}&= \langle \dot{\SB},e-\diag(\SB)^{-1}\SB^{\ast}\rangle + \langle \dot{\IB},e-\diag(\IB)^{-1}\IB^{\ast}\rangle + \langle \dot{\RB},e-\diag(\RB)^{-1}\RB^{\ast}\rangle\text{ or }\\
-\dot{V}&= \langle \dot{\SB},\diag(\SB)^{-1}\SB^{\ast}\rangle + \langle \dot{\IB},\diag(\IB)^{-1}\IB^{\ast}\rangle + \langle \dot{\RB},\diag(\RB)^{-1}\RB^{\ast}\rangle\\
&\geq \langle \mu(\IB+\RB) - \diag(\beta)\diag(\IB+\diag(\nu) \NB)^{-1}\diag(\IB)\SB+\kappa\RB,\diag(\SB)^{-1}\SB^{\ast}\rangle \\
&+ \langle \diag(\beta)\diag(\IB+\diag(\nu) \NB)^{-1}\diag(\SB)\IB-(\gamma+\mu)\IB,\diag(\IB)^{-1}\IB^{\ast}\rangle\\
& + \langle \gamma\IB-(\mu+\kappa)\RB,\diag(\RB)^{-1}\RB^{\ast}\rangle\\
&=\left\langle e,\mu\left(\diag(\IB)\diag(\SB)^{-1}\SB^{\ast}- \IB^{\ast}\right)+ \frac{\diag(\beta)}{2}\diag(\IB+\diag(\nu)\NB)^{-1}\left(\diag(\SB)\IB^{\ast}-\diag(\SB^{\ast})\IB\right)\right\rangle\\
&+\left\langle e,(\mu+\kappa)(\diag(\RB)\diag(\SB)^{-1}\SB^{\ast}-\RB^{\ast})+\gamma (\diag(\IB)\diag(\RB)^{-1}\RB^{\ast}-\IB^{\ast})\right\rangle\\
&+\left\langle e,\frac{\diag(\beta)}{2}\diag(\IB+\diag(\nu)\NB)^{-1}\left(\diag(\SB)\IB^{\ast}-\diag(\SB^{\ast})\IB\right)\right\rangle\\
&\geq \max\left\{\left\langle e,\mu\IB^{\ast}\right\rangle,\left\langle e,\frac{\diag(\beta)}{2}\diag(\IB+\diag(\nu)\NB)^{-1}\diag(\SB^{\ast})\IB\right\rangle\right\} \\
&\left\langle e,\diag(\IB)\diag(\IB^{\ast})^{-1}\diag(\SB)^{-1}\SB^{\ast}+\diag(\IB)^{-1}\diag(\IB^{\ast})\diag(\SB^{\ast})^{-1}\SB-2e\right\rangle\\
&+\max\left\{\left\langle e,(\mu+\kappa)\RB^{\ast}\right\rangle, \left\langle e,\gamma\IB^{\ast}\right\rangle, \left\langle e,\frac{\diag(\beta)}{2}\diag(\IB+\diag(\nu)\NB)^{-1}\diag(\SB^{\ast})\IB\right\rangle\right\}\\
&\left\langle e,\diag(\RB)\diag(\RB^{\ast})^{-1}\diag(\SB)^{-1}\SB^{\ast}+\diag(\IB)\diag(\IB^{\ast})^{-1}\diag(\RB)^{-1}\RB^{\ast}\right.\\
&\left.+\diag(\IB)^{-1}\diag(\IB^{\ast})\diag(\SB^{\ast})^{-1}\SB-3e\right\rangle\\
&\geq 0
\end{align*}%
by Lemma~\ref{lem:interm} and the arithmetic--geometric mean inequality. On applying the uniqueness assumption to $\EE$, the largest invariant set contained in $\{(\SB,\IB,\RB)\in\Omega:\dot{V}=0\}$ is $\{\EE\}$.
\end{proof}

\section{Sensitivity of the basic reproductive number}
The next generation matrix $G$ is essentially nonnegative, i.e., it has nonnegative off--diagonal entries. Moreover, the matrix $\RM_0\id-G$ is a singular M--matrix. The sensitivity of the basic reproductive number with respect to the flux matrix $\Phi=(\phi_{sj})$ is measured by the following derivative
\begin{equation*}
\partial_{\phi_{sj}}\RM_0=-\sum_{i\neq j}\partial_{g_{ij}}\RM_0\frac{p^2\sigma_j^2\phi_{ij}}{\left(\gamma+\mu+p\sigma_j\right)^2}-\partial_{g_{jj}}\RM_0\frac{p\sigma_j\beta_j\slash\nu_j}{\left(\gamma+\mu+p\sigma_j\right)^2}.
\end{equation*}
Having known that the sensitivity strongly depends on $\partial_{g_{ij}}\RM_0$, we briefly reviewed some of the results regarding the sensitivity of the Perron vector of an essentially nonnegative matrix. Clearly, $\RM_0$ is directly proportional to $\beta_j$ and $\sigma_j$, but inversely proportional to $\nu_j$ at all $j$ owing to the lower bounds cf. Proposition~\ref{prop:neccR} and the $1$--norm as the upper bound of the spectral radius. The sensitivity matrix $(\partial_{g_{ij}}\RM_0)$ is $\id-(\RM_0\id-G)(\RM_0\id-G)^{\#}=vw>0$, where $Q^{\#}$ denotes the group inverse of $Q$ (see~\cite{BG1973} for the details) and $w,v$ denotes the normalized left and right eigenvectors of $G$ associated with $\RM_0$ such that $wv=1$ \cite{MS1978}. Moreover, $\partial_{g_{ii}}\RM_0<1$, and $\RM_0$ is a convex function of all diagonal elements $g_{ii}$ \cite{Coh1978}. If $G$ can be designed in such a way that it serves as a column stochastic matrix, then $\partial_{g_{1j}}\RM_0=\cdots=\partial_{g_{nj}}\RM_0$ for all $j$ where $\sum_{j}\partial_{g_{ij}}\RM_0=1$ for all $i$, and certainly $0<\partial_{g_{ij}}\RM_0<1$ \cite{DN1984}. Additionally, if $G$ is symmetric, by which $\Phi_+$ is symmetric and each column sum is equal to its $1$--norm, then $\RM_0$ is a convex function for all $g_{ij}$, keeping in mind the positivity of the first derivatives \cite{Lax1958}. These findings suggest that $\RM_0$ decreases as any $\phi_{sj}$ increases, whereas the other entries are kept constant. Moreover, the higher $\sigma_j$, the faster $\RM_0$ decreases with respect to any off--diagonal entry in the $j$th column of $\Phi$.   

\section{Conclusions}
We have analyzed a network--based SIR model by examining the existence--stability of a disease--free equilibrium and endemic equilibrium. The analysis highlights the importance of the maximum flux loads in all the observed zones, taking into account that further numerical realization may fail to produce both the equilibria, if sufficient conditions relating the maximum flux loads for the existence are violated. Furthermore, the result regarding the sensitivity of the basic reproductive number with respect to the flux matrix yields two final inferences. First, the disease persistence can be diminished as more jobs are invested in zones where people usually do not go for work, while simultaneously investing in the adequate transportation system to make these jobs accessible. Second, increasing the number of jobs in the zones in which a large number of people move out for work would also help diminish the persistence of the disease.

\section*{Acknowledgements}
This work has been supported financially by Indonesia Ministry of Research and Higher Education (Kemenristekdikti RI), under the research project PUPT research grant scheme 2017,\\ No. 2701/UN2.R3.1/HKP05.00/2017. We thank the referees for their useful suggestions on the earlier version of the paper.

\end{document}